\theoremstyle{plain}
\newtheorem{thm}{Theorem}
\newtheorem*{thm*}{Theorem}
\newtheorem*{lem*}{Lemma}
\newtheorem*{cor}{Corollary}
\theoremstyle{definition}
\newtheorem{Def}{Definition}
\newcommand{\reftit}{\textit}    
\newcommand{\refis}{\textbf}     
\begin{document}

\title{A note on adiabatic theorem for Markov chains and adiabatic quantum computation}
\author{Yevgeniy Kovchegov\footnote{
 Department of Mathematics,  Oregon State University, Corvallis, OR  97331-4605, USA
 \texttt{kovchegy@math.oregonstate.edu}}}
\date{ }
\maketitle

\abstract{We derive an adiabatic theorem for Markov chains using well known facts about mixing and relaxation times. We discuss the results in the context of the recent developments in adiabatic quantum computation.}

\section{Introduction}
 The adiabatic theorem was first stated by  Max Born and Vladimir Fock  in 1928. It asserts that ``a physical system remains in its instantaneous eigenstate if a given perturbation is acting on it slowly enough and if there is a gap between the eigenvalue and the rest of the Hamiltonian's spectrum" (see Wikipedia page on adiabatic theorem).

 Since under certain conditions, there is a way of expressing Hamiltonians via reversible Markov chains, we state and prove the corresponding theorem for Markov chains. The task turned out to be less complicated than it was initially expected, and we used only a small fraction of  the machinery of mixing times and relaxation times of Markov chains that was developed so successfully in the last thirty years (see \cite{a}, \cite{af}, \cite{bk}, \cite{d}, \cite{lpw}, \cite{sj} and references therein). Stated in terms of Markov chains, the adiabatic theorem is intuitively simple and accessible (see \cite{ar} for comparison). Here we will quote Nielsen and Chuang \cite{nc}: ``One of the goals of quantum computation and quantum information is to develop tools which sharpen our intuition about quantum mechanics, and make its predictions more transparent to human minds". It is important to point out that despite clear similarities and relation between the two, the quantum adiabatic theorem and the adiabatic theorem of this paper are different.
 
\vskip 0.3 in
 First we state a version of the adiabatic theorem.
 Given two Hamiltonians, ${\cal H}_{initial}$ and ${\cal H}_{final}$, acting on a quantum system. 
 Let ${\cal H}(s)=(1-s){\cal H}_{initial}+s{\cal H}_{final}$.
 Suppose the system evolves according to ${\cal H}(t/T)$ from time $t=0$ to time $T$ (the so called {\it adiabatic evolution}).
The adiabatic theorem of quantum mechanics states that for $T$ large enough, the final state of the system will be close to
the ground state of ${\cal H}_{final}$. They are $\epsilon$ close in $l_2$ norm whenever $T \geq {C \over \epsilon \beta^3}$,
where $\beta$ is the least spectral gap of ${\cal H}(s)$ over all $s \in [0,1]$, and $C$ depends linearly on a square of the distance between ${\cal H}_{initial}$ and ${\cal H}_{final}$.
 See \cite{jrs} and \cite{m}. There are many versions of the adiabatic theorem.

\vskip 0.3 in
\noindent
Now, let us list the main concepts involved in the theory of mixing and relaxation times that we will need in the following sections. We refer the readers to \cite{lpw} and \cite{af} for details.  
\begin{Def}
If $\mu$ and $\nu$ are two probability distributions over $\Omega$, then the {\it total variation distance} is 
$$\| \mu - \nu \|_{TV} = {1 \over 2} \sum_{x \in \Omega} |\mu(x)-\nu(x)|=\sup_{A \subset \Omega} |\mu(A)-\nu(A)|$$
Observe that the total variation distance measures the coincidence between the distributions on a scale from zero to one.
\end{Def}

\begin{Def}
 Suppose $P$ is an irreducible and aperiodic finite Markov chain with stationary distribution $\pi$, i.e. $\pi P=\pi$. Given an $\epsilon >0$, the mixing time $t_{mix}(\epsilon)$ is defined as
 $$t_{mix}(\epsilon)=\inf\left\{t~:~\|\nu P^t - \pi\|_{TV} \leq \epsilon,~~\text{ for all probability distributions } \nu \right\}$$
\end{Def}
 
 \vskip 0.2 in
 \noindent
 Now, suppose $P=\Big( p(x,y) \Big)_{x,y}$ is {\it reversible}, i.e. $P$ satisfies the detailed balance condition
 $$\pi(x)p(x,y)=\pi(y) p(y,x)~~\text{ for all }x,y~\text{ in the sample space }\Omega$$
 It is easy to check that if $P$ is reversible, then $P$ is self-adjoint with respect to an inner product induced by $\pi$, and as such will have all real eigenvalues. 
 The difference $\beta=1-|\lambda_{(2)}|$ between the largest eigenvalue (i.e. $\lambda_1=1$) and the second largest (in absolute value) eigenvalue $\lambda_{(2)}$ is called the {\it spectral gap} of $P$. The relaxation time is defined as
 $$\tau_{rlx}={1 \over \beta}$$
 One can show the following relationship between mixing and relaxation times.
 \begin{thm} \label{relax}
 Suppose $P$ is a reversible, irreducible and aperiodic Markov chain with state space $\Omega$ and stationary distribution $\pi$. 
 Then
 $$(\tau_{rlx}-1)\log(2\epsilon)^{-1} \leq t_{mix}(\epsilon) \leq \tau_{rlx} \log (\epsilon \min_{x \in \Omega}\pi(x) )^{-1}$$
 \end{thm}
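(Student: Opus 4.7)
\emph{Proof plan.} Since $P$ is reversible, it is self-adjoint on $L^2(\pi)$, so its eigenvalues $1=\lambda_1>\lambda_2\geq\cdots\geq\lambda_n\geq -1$ are real and there exists a $\pi$-orthonormal eigenbasis $f_1\equiv 1,f_2,\ldots,f_n$. Irreducibility and aperiodicity guarantee $|\lambda_{(2)}|<1$, so $\beta=1-|\lambda_{(2)}|\in(0,1)$. I will prove the two inequalities separately, using this spectral decomposition.

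\emph{Upper bound.} Testing both sides against the eigenbasis yields the expansion
\[
\frac{P^t(x,y)}{\pi(y)} \;=\; \sum_{j=1}^{n}\lambda_j^{t}\,f_j(x)f_j(y).
\]
Subtracting the $j=1$ term, applying Cauchy--Schwarz, and invoking Parseval's identity $\sum_{j=1}^n f_j(x)^2=1/\pi(x)$, one obtains
\[
4\|P^t(x,\cdot)-\pi\|_{TV}^2 \;\leq\; \Bigl\|\tfrac{P^t(x,\cdot)}{\pi}-1\Bigr\|_{L^2(\pi)}^2 \;=\; \sum_{j\geq 2}\lambda_j^{2t}\,f_j(x)^2 \;\leq\; \frac{|\lambda_{(2)}|^{2t}}{\pi(x)}.
\]
Using $|\lambda_{(2)}|^{t}=(1-\beta)^t\leq e^{-\beta t}$ and requiring the right side to be at most $4\epsilon^2$ gives $t\geq \tau_{rlx}\log\bigl(1/(2\epsilon\sqrt{\pi(x)})\bigr)$, which is dominated by $\tau_{rlx}\log\bigl(1/(\epsilon\min_x\pi(x))\bigr)$ since $\sqrt{\pi(x)}\leq 2$.

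\emph{Lower bound.} Choose an eigenfunction $f$ with eigenvalue $\lambda$ of modulus $|\lambda_{(2)}|$, normalized so that $\|f\|_\infty=1$. Orthogonality to the constant function forces $\pi(f)=0$, so picking $x$ with $|f(x)|=1$ and using $P^t f=\lambda^t f$ gives
\[
|\lambda_{(2)}|^{t} \;=\; \bigl|P^t f(x)-\pi(f)\bigr| \;=\; \Bigl|\sum_{y}\bigl(P^t(x,y)-\pi(y)\bigr)f(y)\Bigr| \;\leq\; 2\|P^t(x,\cdot)-\pi\|_{TV}.
\]
Evaluating at $t=t_{mix}(\epsilon)$ and rearranging yields $-t_{mix}(\epsilon)\log(1-\beta)\geq\log(1/(2\epsilon))$, and the elementary inequality $-\log(1-\beta)\leq \beta/(1-\beta)$ converts this to the claimed $t_{mix}(\epsilon)\geq(\tau_{rlx}-1)\log(1/(2\epsilon))$.

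The main obstacle is the upper bound, where one has to set up the spectral expansion of $P^t(x,y)/\pi(y)$ and push the $L^2(\pi)$ estimate down to total variation via Cauchy--Schwarz, keeping track of the $1/\sqrt{\pi(x)}$ factor supplied by Parseval. Once those tools are in place, the lower bound is essentially a one-line witness argument using an extremal eigenfunction together with the standard estimate $|\mu(g)-\nu(g)|\leq 2\|g\|_\infty\,\|\mu-\nu\|_{TV}$.
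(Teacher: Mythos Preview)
The paper does not actually prove this theorem: it is quoted in the introduction as a known fact (``One can show the following relationship\ldots''), with the reader referred to Levin--Peres--Wilmer and Aldous--Fill. So there is no ``paper's own proof'' to compare against.

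Your argument is the standard one from those references and is correct. For the upper bound you use the spectral expansion of $P^t(x,y)/\pi(y)$ in the $\pi$-orthonormal eigenbasis, bound the $L^2(\pi)$ distance via Parseval, and pass to total variation by Cauchy--Schwarz; the resulting sufficient condition $t\geq \tau_{rlx}\log\bigl(1/(2\epsilon\sqrt{\pi_{\min}})\bigr)$ is indeed at most $\tau_{rlx}\log\bigl(1/(\epsilon\,\pi_{\min})\bigr)$ because $\sqrt{\pi_{\min}}\leq 1\leq 2$. For the lower bound you test $P^t-\pi$ against an extremal eigenfunction, obtain $(1-\beta)^{t_{mix}(\epsilon)}\leq 2\epsilon$, and convert via $-\log(1-\beta)\leq \beta/(1-\beta)$. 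Both steps are clean and match what the cited textbooks do; nothing is missing.
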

 \vskip 0.3 in
 To conclude the introduction, we would like to point out that the mixing and relaxation time techniques were widely used in the the theory of computations and algorithms (see \cite{sj}).  This current paper was influenced by the work of D.Aharonov et al \cite{siam}. The model of adiabatic quantum computation is based on the quantum adiabatic theorem. The connection between the adiabatic quantum computation and relaxation times of reversible Markov chains was used in \cite{siam} to prove an equivalence between adiabatic quantum and quantum computations. See \cite{ksv} and \cite{nc} for more on quantum computation.

\section{Hamiltonians, Markov chains, and adiabatic quantum computation}

 Let the Hamiltonian (or {\it energy}) ${\cal H}$ be a Hermitian real matrix. If it is also  true that $I-{\cal H}$ is nonnegative, irreducible and aperiodic (i.e. $(I-{\cal H})^k>0$ for $k$ large enough), by Perron's theorem, its largest (in absolute value) eigenvalue is positive with a unique eigenvector with all positive coordinates. 
 
 Let $I-{\cal H}=\Big(a_{i,j}\Big)$ be an irreducible and aperiodic nonnegative $(N+1) \times(N+1)$ dimensional Hamiltonian matrix, with eigenvalues $$1-\lambda_0 >1-\lambda_1 \geq 1-\lambda_2 \geq \dots \geq 1-\lambda_N,$$ where
 $\lambda_j$ are eigenvalues of the Hamiltonian ${\cal H}$, the lowest of them is the {\it ground energy} $\lambda_0$, and by Perron's theorem, $1-\lambda_0>|1-\lambda_j|~~(j=1,\dots,N)$. Then $$P=\left(P_{i,j}={\alpha_j \over (1-\lambda_0) \alpha_i}a_{i,j}\right),$$
 where the  {\it ground state} ${\bf \alpha}=(\alpha_0,\dots,\alpha_n)$ with all $\alpha_j >0$ is the unique eigenvector of ${\cal H}$ corresponding to $\lambda_0$,
 will be an irreducible aperiodic reversible Markov chain over $(N+1)$ states,  with eigenvalues
 $$r_0=1>r_1 \geq r_2 \geq \dots \geq r_N,$$
 where $r_j={1-\lambda_j \over 1-\lambda_0}$, and stationary distribution 
 $$\pi={1 \over \|{\bf \alpha}\|_{\ell_2}^2}(\alpha_0^2,\dots,\alpha_N^2)$$
 Thus if $I-{\cal H}$ nonnegative, irreducible and aperiodic, we can convert the Hamiltonian into a reversible transition probability matrix.
 Observe that the spectral gap $\beta=1-\max\{|r_1|,|r_N|\}$ of $P$ will be a multiple of the spectral gap 
 $\beta_H=\lambda_1-\lambda_0$ of ${\cal H}$, $$\beta={\beta_H \over 1-\lambda_0}$$
 whenever $r_1 \geq |r_N|$. If we consider a {\it lazy Markov chain} $\widetilde{P}={1 \over 2}(I+P)$, then the eigenvectors of $\widetilde{P}$  will coincide with the eigenvectors of $P$, and its eigenvalues will be 
 $$\tilde{r}_0=1>\tilde{r}_1 \geq \tilde{r}_2 \geq \dots \geq \tilde{r}_N \geq 0,$$
 where $\tilde{r}_j={1+r_j \over 2}$. The lazy Markov chain $\widetilde{P}$ evolves twice slower than $P$, and its spectral gap
 $$\widetilde{\beta}=1-\max\{|\tilde{r}_1|,|\tilde{r}_N|\}=1-\tilde{r}_1={\beta_H \over 2(1-\lambda_0)}$$
 Observe that $$P={1 \over 1-\lambda_0}(I- A^{-1}\mathcal{H}A),$$
 where $$A=\left(\begin{array}{cccc}\alpha_0 & 0 & \dots & 0 \\0 & \alpha_1 & \ddots & 0 \\\vdots & \ddots & \ddots & 0 \\0 & \dots & \dots & \alpha_N\end{array}\right)$$

 \noindent
  In quantum computation, if we operate on $n$ {\it qubits} (quantum bits), the Hamiltonian will be a $2^n \times 2^n$ matrix. In this case, if the conditions are satisfied, and we can convert the Hamiltonian into a Markov chain, the stationary distribution $\pi$ will represent the quantum probabilities for the multiple qubits, $|0\dots00>,~|0\dots01>,\dots,|1\dots11>$
  We can also consider a restriction of the Hamiltonian to an invariant subspace, that will have the needed properties.
 
 \vskip 0.2 in
 In the model of adiabatic quantum computation, an algorithm is described by the adiabatic evolution
 $${\cal H}(s)=(1-s){\cal H}_{initial}+s{\cal H}_{final},$$
 where the input is the {\it ground state} (the eigenvector corresponding to the lowest eigenvalue of)  ${\cal H}_{initial}$, and
 the desired output is the ground state of ${\cal H}_{final}$. Thus, according to the quantum adiabatic theorem, if the duration time $T$  is large enough, then the output will be $\epsilon$ close to the ground state of ${\cal H}_{final}$. The duration time $(T\max_{s}\|{\cal H}(s)\|)$ required will be the running time for the algorithm (according to some definitions). Therefore finding the optimal $T_{\epsilon}$ would optimize the algorithm's running time. In \cite{siam}, D.Aharonov et al use Markov chain techniques to estimate the spectral gap for the Markov chain $P_{final}$ generated from ${\cal H}_{final}$, and therefore, obtaining the estimate for the spectral gap of  ${\cal H}(s)$ when $s$ is close enough to one, and subsequently using the quantum adiabatic theorem to bound the running time. 

 \vskip 0.2 in
 \noindent
 Now, suppose $\mathcal{H}$ is a Hermitian real matrix that satisfies the required conditions, i.e.  $(I-{\cal H})^k>0$ for $k$ large enough.
 Then $(1-\lambda_0)P=I- A^{-1}\mathcal{H}A$ and
 $$\mathcal{H}=I-(1-\lambda_0)D^{-{1 \over 2}}\widehat{P}D^{1 \over 2},$$
 where
 $$D=\left(\begin{array}{cccc} \pi_0 & 0 & \dots & 0 \\0 & \pi_1 & \ddots & 0 \\\vdots & \ddots & \ddots & 0 \\0 & \dots & \dots & \pi_N\end{array}\right)={1 \over \|{\bf \alpha}\|_{\ell_2}^2}A^2~~~\text{ and }~~\widehat{P}=DPD^{-1}$$
Here, since $\pi=(\pi_0,\dots,\pi_N)$ is the stationary distribution of reversible $P$, the matrix $\widehat{P}$ will be a self-adjoint Markov chain with the same spectrum as $P$.  
 \vskip 0.2 in
 \noindent
 In the above discussion we summarized a connection between the Hamiltonians and Markov chains in the context of quantum information.

\section{Adiabatic theorem for Markov chains}
 
  Given two  transition probability operators, $P_{initial}$ and $P_{final}$, with a finite state space $\Omega$. Suppose $P_{final}$ is irreducible and aperiodic and $\pi_f$ is the unique stationary distribution for $P_{final}$. Let 
  $$P_s=(1-s)P_{initial}+sP_{final}$$
  \begin{Def}
  Given $\epsilon>0$, a time $T_{\epsilon}$ is called the {\it adiabatic time} if it is the least $T$ such that
  $$\max_{\nu} \| \nu P_{1 \over T} P_{2 \over T} \cdots P_{T-1 \over T} P_1 - \pi_f \|_{TV} \leq \epsilon,$$
  where the maximum is taken over all probability distributions $\nu$ over $\Omega$.  
  \end{Def} 
  \begin{thm} \label{adiabatic1} Let $t_{mix}$ denote the mixing time for $P_{final}$. Then the adiabatic time
  $$T_{\epsilon}=O\left({t_{mix}(\epsilon/2)^2 \over \epsilon} \right)$$
  \end{thm}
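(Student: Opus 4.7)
The plan is to isolate the last $m := t_{mix}(\epsilon/2)$ steps of the inhomogeneous product, on which every $P_{j/T}$ is already very close to $P_{final}$ in row-wise total variation, and to treat that window as a single application of $P_{final}^m$. Concretely, let $\mu := \nu P_{1/T}\cdots P_{(T-m)/T}$ be the (unknown, but ultimately irrelevant) distribution produced by the first $T-m$ steps. By the triangle inequality,
$$\bigl\|\nu P_{1/T}\cdots P_1 - \pi_f\bigr\|_{TV} \;\leq\; \bigl\|\mu(P_{(T-m+1)/T}\cdots P_1) - \mu P_{final}^m\bigr\|_{TV} + \bigl\|\mu P_{final}^m - \pi_f\bigr\|_{TV},$$
and the second summand is at most $\epsilon/2$ directly from the definition of $t_{mix}(\epsilon/2)$ applied to the distribution $\mu$.

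To control the first summand, I would invoke the standard telescoping identity
$$\prod_{j=1}^m M_j - Q^m \;=\; \sum_{j=1}^m \Bigl(\prod_{i<j} M_i\Bigr)(M_j-Q)\,Q^{m-j}$$
with $Q=P_{final}$ and $M_j = P_{(T-m+j)/T}$. After acting on $\mu$ from the left, each summand becomes a signed measure of total mass zero, and two routine facts close the estimate: a stochastic matrix is a weak contraction in $\|\cdot\|_{TV}$ on signed measures, and for any distribution $\rho$ one has $\|\rho(M-Q)\|_{TV}\le \max_x \|M(x,\cdot)-Q(x,\cdot)\|_{TV}$. The convex-combination structure $P_s - P_{final} = (1-s)(P_{initial}-P_{final})$ gives the row-wise bound $\|M_j(x,\cdot)-P_{final}(x,\cdot)\|_{TV} \leq (m-j)/T$, so the first summand is at most
$$\sum_{j=1}^m \frac{m-j}{T} \;=\; \frac{m(m-1)}{2T}.$$

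Requiring $m(m-1)/(2T)\leq \epsilon/2$, i.e. $T \geq m(m-1)/\epsilon$, then forces the total error to be at most $\epsilon$, yielding $T_\epsilon = O(m^2/\epsilon) = O(t_{mix}(\epsilon/2)^2/\epsilon)$ as claimed. What makes this route attractive is that no hypothesis on $P_{initial}$, and no spectral information about the intermediate $P_s$, is ever used beyond the trivial bound $\|P_{initial}(x,\cdot)-P_{final}(x,\cdot)\|_{TV}\leq 1$: all the mixing is delegated to $P_{final}$ on a window of length $m$, and the quadratic factor $m^2$ arises simply from summing the per-step perturbation $(m-j)/T$ across that window. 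The only step demanding even mild care is packaging the telescoping/contraction lemma cleanly, which is the main (and really the sole) technical obstacle.
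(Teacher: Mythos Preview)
Your argument is correct, but it follows a genuinely different route from the paper. The paper does not telescope: it keeps the first $N=T-t_{mix}(\epsilon/2)$ factors intact as $\nu_N=\nu P_{1/T}\cdots P_{N/T}$, then expands each of the remaining factors $P_{j/T}=(1-j/T)P_{initial}+(j/T)P_{final}$ and isolates the single ``all-$P_{final}$'' term, which carries weight $\prod_{j=N+1}^{T}(j/T)=T!/(N!\,T^{T-N})$. The leftover $\mathcal{E}$ is a sub-probability measure of mass $1-T!/(N!\,T^{T-N})$, and the analysis reduces to showing this defect is at most $\epsilon/2$, which the paper does via the integral bound $\sum_{j=N+1}^{T}\log j\ge\int_N^T\log x\,dx$ and a Taylor expansion of $\log(1+x)$. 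Your telescoping-plus-contraction argument is cleaner and more elementary: it avoids the Stirling-type estimate entirely and yields the explicit sufficient condition $T\ge t_{mix}(\epsilon/2)(t_{mix}(\epsilon/2)-1)/\epsilon$ directly. What the paper's decomposition buys, on the other hand, is a probabilistic reading---at each step one ``flips a coin'' to apply $P_{initial}$ or $P_{final}$---which is exactly the structure that recurs in the continuous-time result (Theorem~3), where uniformization and Poisson order statistics replace the product $\prod(j/T)$ by an exponential weight; your telescoping proof would also adapt to that setting, but less transparently.
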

  \begin{proof}
  Observe that
  $$\nu P_{1 \over T} P_{2 \over T} \cdots P_{T-1 \over T} P_1={T! \over N! T^{T-N}}~~\nu_N P_{final}^{T-N}+ \mathcal{E},$$
  where $\nu_N=\nu P_{1 \over T} P_{2 \over T} \cdots P_{N \over T}$ and $\mathcal{E}$ is the rest of the terms.
  Hence, by triangle inequality,
  \begin{eqnarray*}
  \max_{\nu} \| \nu P_{1 \over T} P_{2 \over T} \cdots P_{T-1 \over T} P_1 - \pi_f \|_{TV} 
  & \leq & 
  \max_{\nu} \| \nu P_{final}^{T-N}  - \pi_f \|_{TV}\cdot {T! \over N! T^{T-N}} +S_N,
  \end{eqnarray*}
where $0 \leq S_N \leq 1-{T! \over N! T^{T-N}}$.

 \vskip 0.2 in
 \noindent
 Let $T=K t_{mix}(\epsilon/2)$ and $N=(K-1) t_{mix}(\epsilon/2)$, so that 
 $ \max_{\nu} \| \nu P_{final}^{T-N}  - \pi_f \|_{TV} \leq \epsilon /2$. Observe that 
 $$e^{\int_{N}^T \log{x} dx+(T-N)\log{T}} \leq  {T! \over N! T^{T-N}} =e^{\sum_{j=N+1}^T \log{j}+(T-N)\log{T}} \leq 1$$
 and therefore, expressing $T$ and $N$ via $t_{mix}(\epsilon/2)$, and simplifying, we obtain
 $$\left( {\left(1 +{1 \over K-1} \right)^{K-1} \over e} \right)^{t_{mix}(\epsilon/2)}=e^{N\log{T \over N}-(T-N)} \leq  {T! \over N! T^{T-N}} \leq 1 $$   
 So
 $$0 \leq S_N \leq 1-{T! \over N! T^{T-N}} \leq 1-\left( {\left(1 +{1 \over K-1} \right)^{K-1} \over e} \right)^{t_{mix}(\epsilon/2)}$$
  and we need to find the least $K$ such that
  $$1-\left( {\left(1 +{1 \over K-1} \right)^{K-1} \over e} \right)^{t_{mix}(\epsilon/2)} \leq \epsilon /2$$
 Now, since $\log(1+x)=x-{x^2 \over 2}+O(x^3)$,  the least such $K$ is approximated as follows
 $$K \approx {t_{mix}(\epsilon/2) \over -2 \log\left(1-\epsilon/2 \right)} \approx {t_{mix}(\epsilon/2) \over \epsilon}$$
  Thus for $T=K t_{mix}(\epsilon/2)\approx {t_{mix}(\epsilon/2)^2 \over \epsilon}$,
 $$\max_{\nu} \| \nu P_{1 \over T} P_{2 \over T} \cdots P_{T-1 \over T} P_1 - \pi_f \|_{TV} \leq \epsilon$$
 \end{proof}
Observe that Theorem \ref{adiabatic1} is independent of the distance. We can use $\ell_2$ norm in the definitions of adiabatic and mixing times, and arrive to the same result.

 While a version of the quantum adiabatic theorem can be proved without a gap condition (see \cite{ae}), the Markov chain $P_{final}$ over finite sample space $\Omega$, if it is reversible, will have a spectral gap. Here we apply Theorem \ref{relax} to the result in Theorem \ref{adiabatic1}. 
 \begin{cor}  Suppose $P_{initial}$ and $P_{final}$ are  Markov chains with state space $\Omega$. If $P_{final}$ is reversible, irreducible and aperiodic with its spectral gap $\beta>0$, then
 $$T_{\epsilon}=O\left({\log{2 \over \epsilon}+\log{1 \over \min_{x \in \Omega}\pi_f(x)} \over \epsilon \beta^2}\right)$$
 \end{cor}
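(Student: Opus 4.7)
The plan is to combine the two preceding results as black boxes: Theorem \ref{adiabatic1} reduces control of the adiabatic time $T_\epsilon$ to control of $t_{mix}(\epsilon/2)$ for $P_{final}$, and Theorem \ref{relax} bounds that mixing time in terms of the relaxation time $\tau_{rlx}=1/\beta$ and the minimum stationary probability. I would not re-run any of the combinatorial bookkeeping from the proof of Theorem \ref{adiabatic1}; the point of the corollary is that it is a one-line substitution once the two theorems are in place.

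The first step is to apply the upper bound in Theorem \ref{relax} to $P_{final}$, which is reversible, irreducible, and aperiodic with spectral gap $\beta>0$, using $\epsilon/2$ in place of $\epsilon$:
\[
t_{mix}(\epsilon/2) \;\leq\; \tau_{rlx}\,\log\!\left(\frac{2}{\epsilon\,\min_{x\in\Omega}\pi_f(x)}\right) \;=\; \frac{1}{\beta}\left[\log\frac{2}{\epsilon}+\log\frac{1}{\min_{x\in\Omega}\pi_f(x)}\right].
\]
Note that no hypothesis on $P_{initial}$ is needed, mirroring the fact that Theorem \ref{adiabatic1} itself places no requirement on the initial chain. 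The second step is to insert this estimate into the conclusion $T_\epsilon = O\bigl(t_{mix}(\epsilon/2)^{2}/\epsilon\bigr)$ of Theorem \ref{adiabatic1}, which immediately gives
\[
T_\epsilon \;=\; O\!\left(\frac{\bigl[\log(2/\epsilon)+\log(1/\min_{x\in\Omega}\pi_f(x))\bigr]^{2}}{\epsilon\,\beta^{2}}\right),
\]
matching the stated form (with the logarithmic factor entering squared because the mixing time appears squared in Theorem \ref{adiabatic1}).

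I do not expect any genuine obstacle, since the reversibility and gap hypotheses are precisely those required by Theorem \ref{relax}, and Theorem \ref{adiabatic1} is distance- and spectrum-agnostic. The only minor bookkeeping is (i) absorbing the additive $\log 2$ produced by the shift from $\epsilon$ to $\epsilon/2$ into the $O(\cdot)$ constant, and (ii) recognizing that the two logarithmic terms in the numerator arise from splitting the single logarithm $\log\bigl(\tfrac{2}{\epsilon\,\min_{x}\pi_f(x)}\bigr)$ into its $\epsilon$-dependent and $\pi_f$-dependent parts before squaring. Nothing else from the machinery of Section~2 is invoked.
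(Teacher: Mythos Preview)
Your approach is exactly what the paper does: it simply says the corollary follows by applying Theorem~\ref{relax} to the result of Theorem~\ref{adiabatic1}, with no further argument. Your substitution is carried out correctly, and your parenthetical remark is on point---the logarithmic numerator should indeed enter squared, so the paper's displayed bound appears to have dropped an exponent.
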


\section{Continuous time Markov processes}

 In the case of continuous time Markov chains, an equivalent result is produced via the method of uniformization and order statistics. Suppose $Q$ is a bounded Markov generator for a continuous time Markov chain $P(t)$, and \
 $\lambda \geq \max_{i \in \Omega} \sum_{j:j \not= i}q(i,j)$ is the upper bound on the departure rates over all states. The method of {\it uniformization} provides an expression for the transition probabilities $P(t)$ as follows:
 $$P(t)=\sum_{n=0}^{\infty} {(\lambda t)^n \over n!} e^{-\lambda t} P_{\lambda}^n,~~~~~\text{ where }~P_{\lambda}=I+{1 \over \lambda}Q$$
 The expression is obtained via conditioning on the number of arrivals in a Poisson process with rate $\lambda$.
 \vskip 0.2 in
 \noindent
 The definition of a Mixing time  is similar in the case of continuous time processes.
 \begin{Def}
 Suppose $P(t)$ is an irreducible and finite continuous time Markov chain with stationary distribution $\pi$. Given an $\epsilon >0$, the mixing time $t_{mix}(\epsilon)$ is defined as
 $$t_{mix}(\epsilon)=\inf\left\{t~:~\|\nu P(t) - \pi\|_{TV} \leq \epsilon,~~\text{ for all probability distributions } \nu \right\}$$
\end{Def}

 Suppose $Q_{initial}$ and $Q_{final}$ are two bounded generators for continuous time Markov processes over a finite state space $\Omega$, and $\pi_f$ is the only stationary distribution for $Q_{final}$.
   Let $$Q[s]=(1-s)Q_{initial}+sQ_{final}$$ be a time non-homogeneous  generator.
   Given $T>0$, let $P_T(t_1,t_2)$ ($0 \leq t_1 \leq t_2 \leq T$) denote a matrix of transition probabilities of a Markov process generated by $Q\left[{t \over T}\right]$ over $[t_1,t_2]$ time interval.
   
   \vskip 0.2 in
   \noindent
 Observe that the continuous time Markov adiabatic evolution is governed by 
 $${d \nu_t \over dt}=\nu_t Q\left[{t \over T}\right],~~~t \in [0,T]$$
 while the quantum adiabatic evolution is described via the corresponding Schr\"odinger's equation 
 ${d v_t \over dt}=-iv_t \mathcal{H}^T\left({t \over T}\right)$.

   \vskip 0.2 in  
   \begin{Def}
  Given $\epsilon>0$, a time $T_{\epsilon}$ is called the {\it adiabatic time} if it is the least $T$ such that
  $$\max_{\nu} \| \nu P_{T}(0,T) - \pi_f \|_{TV} \leq \epsilon,$$
  where the maximum is taken over all probability distributions $\nu$ over $\Omega$.  
  \end{Def} 
We will state and prove the following adiabatic theorem.
\vskip 0.2 in
\begin{thm} Let $t_{mix}$ denote the mixing time for $Q_{final}$. Take $\lambda$ such that
$$\lambda \geq \max_{i \in \Omega} \sum_{j:j \not= i}q_{initial}(i,j),~~\lambda \geq \max_{i \in \Omega} \sum_{j:j \not= i}q_{final}(i,j)~ 
\text{ and }~\lambda \geq {\epsilon \over 2t_{mix}(\epsilon/2)}+1,$$ 
where
$q_{initial}(i,j)$ and $q_{final}(i,j)$ are the rates in $Q_{initial}$ and $Q_{final}$ respectively.
Then the adiabatic time
$$T_{\epsilon} \leq {\lambda t_{mix}(\epsilon/2)^2 \over \epsilon} $$   
\end{thm}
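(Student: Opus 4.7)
The plan is to mirror the proof of Theorem~\ref{adiabatic1}, replacing the expansion of the discrete matrix product by uniformization together with Poisson thinning. Since $\lambda$ dominates the total exit rates of both $Q_{initial}$ and $Q_{final}$, the matrices $P_{\lambda,i} := I + Q_{initial}/\lambda$ and $P_{\lambda,f} := I + Q_{final}/\lambda$ are genuine stochastic matrices, and for each $s \in [0,1]$
\[
I + \frac{1}{\lambda}\,Q[s] \;=\; (1-s)\,P_{\lambda,i} + s\,P_{\lambda,f}.
\]
Uniformizing the inhomogeneous process with rate $\lambda$ and marking each Poisson arrival as ``initial'' with probability $1-t/T$ or ``final'' with probability $t/T$, I can realize $P_T(0,T)$ as the expected composite transition produced by two independent inhomogeneous Poisson processes on $[0,T]$: $N_i$ of intensity $\lambda(1-t/T)$ and $N_f$ of intensity $\lambda t/T$, where every $N_i$-arrival contributes a factor $P_{\lambda,i}$ and every $N_f$-arrival contributes a factor $P_{\lambda,f}$.

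Next I would isolate a short tail interval on which only $Q_{final}$-type evolution occurs. Fix $s \in (0,T)$ and let $E$ be the event that $N_i$ has no arrivals on $[T-s,T]$; then $\Pr(E) = \exp(-\lambda s^2/(2T))$. Since $N_f$ is independent of $N_i$, conditional on $E$ the transition over $[T-s,T]$ equals $P_{\lambda,f}^m$ with $m = N_f([T-s,T]) \sim \mathrm{Poisson}(\mu)$ and $\mu = \lambda s(1-s/(2T))$. The uniformization identity collapses this Poisson average to
\[
\sum_m \frac{e^{-\mu}\mu^m}{m!}\,P_{\lambda,f}^m \;=\; e^{\mu(P_{\lambda,f}-I)} \;=\; e^{s'Q_{final}}, \qquad s' := s(1-s/(2T)).
\]
Writing $R$ for the random transition on $[0,T-s]$ (independent of $E$ and of the transition $R_2$ on $[T-s,T]$), conditioning on $E$ versus $E^c$ yields
\[
\nu\,P_T(0,T) \;=\; \Pr(E)\,\bigl(\nu\,E[R]\bigr)\,e^{s'Q_{final}} \;+\; \Pr(E^c)\,\nu\,E[R\,R_2 \mid E^c].
\]

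The final step is calibration. Choose $s$ just large enough that $s' \geq t_{mix}(\epsilon/2)$; then $e^{s'Q_{final}}$ maps any probability distribution to within $\epsilon/2$ of $\pi_f$, and the triangle inequality yields
\[
\|\nu\,P_T(0,T) - \pi_f\|_{TV} \;\leq\; \epsilon/2 \;+\; \Pr(E^c) \;\leq\; \epsilon/2 \;+\; \lambda s^2/(2T).
\]
At $T = \lambda\,t_{mix}(\epsilon/2)^2/\epsilon$, solving $s - s^2/(2T) = t_{mix}(\epsilon/2)$ gives $s \approx t_{mix}(\epsilon/2) + \epsilon/(2\lambda)$, and the hypothesis $\lambda \geq \epsilon/(2\,t_{mix}(\epsilon/2)) + 1$ is precisely what keeps this correction small enough to force $\lambda s^2/(2T) \leq \epsilon/2$, giving the stated bound.

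The main obstacle is this last calibration. The inhomogeneous intensity $\lambda t/T$ of $N_f$ creates a ``time distortion'' $s' = s - s^2/(2T) < s$, so one cannot take $s$ equal to $t_{mix}(\epsilon/2)$ exactly; the minimum admissible gap $s - t_{mix}(\epsilon/2)$ is of order $\epsilon/\lambda$, and balancing it against $\Pr(E^c) \leq \lambda s^2/(2T)$ at the stated value of $T$ is a tight two-sided estimate that the numerical condition on $\lambda$ is arranged to make work. All other steps --- verifying the uniformization, the Poisson thinning, the matrix exponential identity, and the triangle inequality --- are routine once the two-Poisson decomposition is in place.
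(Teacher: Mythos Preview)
Your approach is essentially the paper's own: the paper also uniformizes on the tail interval $[N,T]$ with $T-N=t_{mix}(\epsilon/2)$, expands the inhomogeneous product via the order statistics of the Poisson arrival times, and isolates the ``all-$P_1$'' term --- which is exactly your event $E$ in the thinning picture --- obtaining the same effective $P_{final}$-time $s'=(1-\tfrac{1}{2K})\,t_{mix}(\epsilon/2)$ and the same error bound $S_N\le 1-e^{-\lambda t_{mix}(\epsilon/2)/(2K)}$. Your Poisson-thinning formulation is a clean probabilistic rephrasing of the paper's explicit integral computation, and the delicate calibration you flag at the end is precisely the step the paper handles via the condition $\lambda(1-\tfrac{1}{2K})\ge 1$ together with the hypothesis $\lambda\ge \epsilon/(2t_{mix}(\epsilon/2))+1$.
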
   
 \begin{proof}
 Observe that
 $\lambda \geq \max_{i \in \Omega} \sum_{j:j \not= i}q_t(i,j)$, where $q_t(i,j)$ are the rates in $Q\left[{t \over T}\right]$ ($0 \leq t \leq T$).
 
  Let $T=K t_{mix}(\epsilon/2)$ and $N=(K-1) t_{mix}(\epsilon/2)$, so that 
 $$\max_{\nu} \| \nu P_{final}(T-N)  - \pi_f \|_{TV} \leq \epsilon /2,$$
 where $P_{final}(t)=e^{tQ_{final}}$ denotes the transition probability matrix associated with the generator $Q_{final}$.
 
 Now, let $P_0=I+{1 \over \lambda}Q_{initial}$ and $P_1=I+{1 \over \lambda}Q_{final}$. Then $P_0$ and $P_1$ are discrete Markov chains and
 $$\nu P_{T}(0,T)  =  \nu_N P_T(N,T)= \nu_N \left( \sum_{n=0}^{\infty} {(\lambda (T-N)^n \over n!} e^{-\lambda (T-N)} I_n \right),$$
  where $\nu_N=\nu P_{T} (0,N)$ and
  $$I_n={n!  \over (T-N)^n}\int \!\!\!\dots\!\!\! \int _{N<x_1 <x_2 <\dots<x_n<T}[(1-x_1/T)P_0+(x_1/T)P_1]\dots[(1-x_n/T)P_0+(x_n/T)P_1] dx_1 \dots dx_n$$
  i.e. the order statistics of $n$ arrivals within the $[N,T]$ time interval. We used the fact that, when conditioned on the number of arrivals, the arrival times of a Poisson process are distributed as an order statistics of uniform random  variables.
  Hence
  \begin{eqnarray*}
 \nu P_{T}(0,T) & = & \nu_N \left( \sum_{n=0}^{\infty} (\lambda (T-N)^n \cdot {e^{-\lambda (T-N)}  \over (T-N)^n T^n n!} 
 P_1^n \int _N^T \dots \int_N^T x_1\dots x_n  dx_1 \dots dx_n \right)+ \mathcal{E} \\
 & = & e^{-\lambda t_{mix}(\epsilon/2)} \nu_N  \left( \sum_{n=0}^{\infty} {\lambda^n [(1-{1 \over 2K})  t_{mix}(\epsilon/2)]^n \over n!} P_1^n\right)+ \mathcal{E} \\
 & = & e^{-\lambda t_{mix}(\epsilon/2)} \nu_N  P_{final} \left(\lambda \left(1-{1 \over 2K}\right)  t_{mix}(\epsilon/2)\right)+ \mathcal{E} \\
 \end{eqnarray*}
  where  $\mathcal{E}$ is the rest of the terms. Thus, the total variation distance,
  $$\max_{\nu} \| \nu P_{T}(0,T) - \pi_f \|_{TV} \leq e^{-\lambda t_{mix}(\epsilon/2)}\epsilon/2+S_N$$
  whenever $\lambda \left(1-{1 \over 2K}\right) \geq 1$, i.e. $K \geq {\lambda \over 2 (\lambda-1)}$.  
  Taking $K \geq {\lambda t_{mix}(\epsilon/2) \over \epsilon}$, we bound the error term 
  $$S_N=\|\mathcal{E} - \pi_f \|_{TV} \leq1-e^{-\lambda t_{mix}(\epsilon/2)} \sum_{n=0}^{\infty} {\lambda^n [(1-{1 \over 2K})  t_{mix}(\epsilon/2)]^n \over n!}=1-e^{-{\lambda t_{mix}(\epsilon/2) \over 2K}} \leq \epsilon/2$$
  as $\epsilon < -2\log\left( 1 -{\epsilon \over 2}\right)$.
  
   \vskip 0.2 in
   \noindent
  The condition  $K \geq {\lambda \over 2 (\lambda-1)}$ is met since we have taken $\lambda \geq {\epsilon \over 2t_{mix}(\epsilon/2)}+1$ and\\ $K \geq {\lambda t_{mix}(\epsilon/2) \over \epsilon}$.  Therefore 
  $$T_{\epsilon} \leq {\lambda t_{mix}(\epsilon/2)^2 \over \epsilon} $$
  
 \end{proof}
 
 \noindent
 In the end, we would like to mention a possible application.
 The Glauber dynamics of a finite Ising model is a continuous time Markov process. Its mixing time can be estimated using path coupling. The adiabatic transformation of the Markov generator corresponds to an adiabatic transformation of the Hamiltonian. The above result can be applied to obtain the adiabatic time for the transformation. The result can be adjusted when the adiabatic evolution of the generator is nonlinear.


\bibliographystyle{amsplain}

\end{document}